\newtheorem{thm}{Theorem}[section]
\newtheorem{lem}[thm]{Lemma}
\newtheorem{cor}[thm]{Corollary}
\newtheorem{prop}[thm]{Proposition}
\newtheorem{ob}[thm]{Observation}
\theoremstyle{definition}
\theoremstyle{remark}
\newtheorem{remark}[thm]{Remark}
\numberwithin{equation}{section}
\newcommand{\M}{\mathcal{M}}
\newcommand{\J}{\mathcal{J}}
\newcommand{\N}{\mathbb{N}}
\newcommand{\CC}{\widehat{\mathbb{C}}}
\newcommand{\R}{\mathbb{R}}
\newcommand{\Q}{\mathbb{Q}}
\newcommand{\C}{\mathbb{C}}
\newcommand{\D}{\mathbb{D}}
\newcommand{\Z}{\mathbb{Z}}
\newcommand{\closure}{\overline}
\begin{document}

\title{A remark on Zagier's observation of the Mandelbrot set}

\author{Hirokazu Shimauchi}
\address{Graduate School of Information Sciences, \endgraf Tohoku University, Sendai, 980-8579, Japan.}
\email{shimauchi@ims.is.tohoku.ac.jp, hirokazu.shimauchi@gmail.com}

\subjclass[2010]{Primary 37F45; Secondary 30D05.}
\date{April 1, 2013 and, in revised form, March 24, 2014.}
\keywords{Mandelbrot set}

\begin{abstract}
We investigate the arithmetic properties of the coefficients for the normalized conformal mapping of the exterior of the Multibrot set.
In this paper, an estimate of the prime factor of the denominator of these coefficients is given.
Bielefeld, Fisher and Haeseler \cite{laurent} presented Zagier's observation for the growth of the denominator of the coefficients for the Mandelbrot set, and Yamashita \cite{yamashita} verified it.
Our study takes into consideration both Zagier's observation and Yamashita's estimate.
\end{abstract}

\maketitle

\section{Introduction}
Let $\C$ be the complex plane, $\CC$ the Riemann sphere, $\D$ the open unit disk, $\D^{*}$ the exterior of the closed unit disk, and $\Z^+$ the set of non-negative integers.
We denote the $n$-th iteration of polynomial $P$ by $P^{\circ n}$, which is defined inductively by $P^{\circ (n+1)}=P \circ P^{\circ n}$ with $P^{\circ 0}(z)=z$. The \textit{Julia set} $\J_{P}$ of $P$ is the set of all $z \in \CC$ such that $\{ P^{\circ n} \}_{n=0}^{\infty}$ is not normal in any neighborhood of $z.$
We consider the complex dynamical systems of the polynomials given by $P_{d,c}(z):=z^d+c$ on $\CC$, where $c \in \C$ is a parameter and $d \in \N \setminus \{1\}$ is fixed.
The \textit{Multibrot set} $\M_d$ of degree $d$ is the set of all parameters $c \in \C$ for which $\J_{P_{d,c}}$ is connected.
The original \textit{Mandelbrot set} is derived in the case $d=2$.
It is known that $\M_d$ is compact and is contained in the closed disk of radius $2^{1/(d-1)}$ with center $0$ (see e.g. {\cite[pp. 20-22]{taniguchi}} and \cite{sym}).

Constructing a conformal homeomorphism $\Phi_2:\CC \setminus \M_2 \rightarrow \D^*$ that satisfies $ \Phi_2(z)/z \rightarrow 1 $ as $ z \rightarrow \infty$, Douady and Hubbard \cite{iteration} proved the connectedness of the Mandelbrot set.
The statement can be generalized to $d \geq 2$ in the same way.
For more details, see \cite{laurent} for $d=2$ and \cite{yamashita,ICFIDCAA} for the generalized statement.

\begin{thm}[{\cite[Theorem 2]{iteration}}]
There exists a conformal homeomorphism $\Phi_d: \CC \setminus \M_d \rightarrow \D^*$ that satisfies $\lim_{z \rightarrow \infty} \Phi_d (z)/z = 1 .$
\end{thm}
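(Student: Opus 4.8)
The plan is to construct $\Phi_d$ from the dynamically defined B\"ottcher coordinates of the family $P_{d,c}$. For each parameter $c$, since $0$ is the only critical point of $P_{d,c}$, there is a unique conformal map $\varphi_c$ on a neighbourhood of $\infty$ with $\varphi_c(z)/z\to 1$ as $z\to\infty$ and $\varphi_c(P_{d,c}(z))=\varphi_c(z)^{d}$. Writing $G_c(z)=\lim_{n\to\infty}d^{-n}\log^{+}\abs{P_{d,c}^{\circ n}(z)}$ for the escape-rate function of the basin of $\infty$, the map $\varphi_c$ extends univalently to $\{z:G_c(z)>G_c(0)\}$, a connected region containing no critical point of $G_c$ (because $G_c(0)$ is the largest critical value of $G_c$). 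When $c\notin\M_d$ the critical orbit escapes, so $G_c(0)>0$; and $P_{d,c}(0)=c$ gives $G_c(c)=d\,G_c(0)>G_c(0)$, so the critical value $c$ lies in that region. I therefore set
\[
  \Phi_d(c):=\varphi_c(c)\ \ (c\in\C\setminus\M_d),\qquad \Phi_d(\infty):=\infty,
\]
and note $\abs{\Phi_d(c)}=e^{G_c(c)}>1$, so $\Phi_d$ maps into $\D^{*}$.

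The next step is holomorphy and the normalization at $\infty$. From the telescoping product $\varphi_c(z)=z\prod_{n\ge 0}\bigl(1+c\,\bigl(P_{d,c}^{\circ n}(z)\bigr)^{-d}\bigr)^{1/d^{n+1}}$ (principal branches, $\abs z$ large), equivalently from the locally uniform limit $\varphi_c(z)=\lim_{n}\bigl(P_{d,c}^{\circ n}(z)\bigr)^{1/d^{n}}$, one checks that $(c,z)\mapsto\varphi_c(z)$ is jointly holomorphic where defined, so $c\mapsto\Phi_d(c)$ is holomorphic on $\C\setminus\M_d$. The normalization $\varphi_c(z)/z\to 1$ propagates to $\Phi_d(c)=c+O(1)$ as $c\to\infty$ (indeed $G_c(c)=\log\abs c+o(1)$ there), so $\Phi_d$ has a simple pole at $\infty$ and the extension $\Phi_d(\infty)=\infty$ is holomorphic.

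The heart of the argument is that $\Phi_d:\CC\setminus\M_d\to\D^{*}$ is proper. Put $G(c):=G_c(c)=d\,G_c(0)$; on $\C\setminus\M_d$ this is the locally uniform limit of the continuous functions $c\mapsto d^{-n}\log^{+}\abs{P_{d,c}^{\circ n}(c)}$, and I would show it extends continuously to all of $\C$ with $G\equiv 0$ on $\M_d$. The needed input is the standard escape estimate: since $\M_d\subset\{\abs c\le 2^{1/(d-1)}\}$, there is a fixed radius beyond which the orbit of $c$ under $P_{d,c}$ grows super-exponentially, whereas a bounded orbit forces $c\in\M_d$; for $c$ close to $\M_d$ the first exit past that radius occurs only after arbitrarily many steps, which bounds $G(c)$ by an arbitrarily small quantity. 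Granting this, for each $t>0$ the set $\{c\in\C\setminus\M_d:G(c)\ge t\}\cup\{\infty\}$ is closed in $\CC$ and disjoint from $\M_d$, hence compact in $\CC\setminus\M_d$; since every compact subset of $\D^{*}$ lies in some $\{\abs w\ge e^{t}\}\cup\{\infty\}$, properness follows.

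Finally, for a nonconstant proper holomorphic map to the connected surface $\D^{*}$ the number of preimages of a point, counted with multiplicity, is a finite constant $k\ge 1$. The fibre over the point $\infty\in\D^{*}$ is $\{\infty\}$ (for finite $c$, $\Phi_d(c)=\varphi_c(c)$ is finite), and the expansion $\Phi_d(c)=c+O(1)$ makes $\Phi_d$ a local biholomorphism there, so $k=1$. Hence $\Phi_d$ is injective, and a degree-one proper holomorphic map is a conformal homeomorphism onto $\D^{*}$; in particular $\CC\setminus\M_d$ is connected and simply connected, so $\M_d$ is connected and full. I expect the sole genuinely technical point to be the properness step --- concretely, the continuity up to $\partial\M_d$, with boundary value $0$, of the critical-value Green's function $c\mapsto G_c(c)$; everything else is soft once the B\"ottcher machinery is in place.
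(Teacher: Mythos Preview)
Your outline is correct and is precisely the Douady--Hubbard construction via B\"ottcher coordinates: define $\Phi_d(c)=\varphi_c(c)$ using the extended B\"ottcher map of $P_{d,c}$, establish holomorphy and the normalization at $\infty$, and conclude bijectivity by showing that $\Phi_d$ is a proper holomorphic map of degree one onto $\D^{*}$, with properness coming from the continuity of the parameter Green's function $c\mapsto G_c(c)$ and its vanishing on $\M_d$. There are no gaps beyond the technical continuity step you already flag.

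That said, there is nothing to compare against: the paper does not give its own proof of this theorem. It is stated as a cited result of Douady and Hubbard (the case $d=2$ in \cite{iteration}, with the extension to general $d$ referred to \cite{laurent,yamashita,ICFIDCAA}), and the paper moves on immediately to define $\Psi_d=\Phi_d^{-1}$. Your proposal is exactly the argument those references contain, so you have supplied what the paper deliberately omits.
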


We define the map $\Psi_d: \D^* \rightarrow \CC \setminus \M_d$ by $\Psi_d (z)= \Phi_d^{-1}(z)$, where $\Phi_d^{-1}$ is the inverse map of $\Phi_d$.
Let $b_{d,m}$ be the coefficients of the Laurent expansion of $\Psi_d$ as $$\Psi_d (z)=z+\sum_{m=0}^{\infty} b_{d,m} z^{-m}.$$
If $\Psi_d$ extends continuously to the unit circle, then, according to Carath\'{e}odory's continuity theorem, the Mandelbrot set is locally connected (see e.g. {\cite[p.154]{taniguchi}}).
There is an important conjecture which states that the Mandelbrot set is locally connected; actually, this conjecture includes the conjecture for the density of hyperbolic dynamics in the quadratic family (see {\cite[p.5]{orsay}}).
It should be noted that the convergence of the Laurent series on the unit circle implies that $\M_d$ is locally connected.
With this in mind, we investigate the arithmetic properties of the above-stated coefficients $b_{d,m}$.

\section{Zagier's observation of the denominator}
Jungreis \cite{jungreis} presented a method to compute the coefficients $b_{2,m}$ of $\Psi_2$.
Levin \cite{levin2} showed an algorithm for computing the coefficients of the function $\Phi_2'/ \Phi_2$.
Using Levin's algorithm, $b_{2,m}$ can be computed.

Jungreis's method can be generalized to $d \in \N \setminus \{1\}$ similarly (see \cite{yamp,yamashita,ICFIDCAA}).
We can make a program for this procedure and derive the exact value of $b_{d,m}$, because the following proposition holds.
We call a real number $x$ \textit{$d$-adic rational} if there exist $k \in \Z$ and $n \in \Z^+$ such that $x=k d^{-n}$.

\begin{prop}[\cite{jungreis}]\label{pro:jun}
The coefficients $b_{d,m}$ are $d$-adic rational numbers.
\end{prop}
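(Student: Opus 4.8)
The plan is to exploit the functional equation that $\Psi_d$ inherits from the Böttcher coordinate and extract a recursion for the $b_{d,m}$ in which every division is by a power of $d$. Recall that the Böttcher coordinate $\varphi_c$ at infinity for $P_{d,c}$ satisfies $\varphi_c(P_{d,c}(z)) = \varphi_c(z)^d$, and that $\Phi_d(c) = \varphi_c(c)$; dually, writing $w = \Phi_d(c)$, the inverse $\Psi_d$ satisfies a relation of the form $\Psi_d(w^d) = \Psi_d(w)^d + \Psi_d(w)$ — more precisely, one gets an identity expressing $\Psi_d(w)$ through the orbit of the critical value, which after the standard manipulation (see Jungreis, and the generalization in \cite{yamashita,ICFIDCAA}) becomes a fixed-point equation $\Psi_d = F(\Psi_d)$ on Laurent series. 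First I would write this identity out carefully at the level of formal Laurent series in $z^{-1}$, with $\Psi_d(z) = z + \sum_{m\ge 0} b_{d,m} z^{-m}$.

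Next I would run the comparison of coefficients. The key structural point is that when one solves the functional equation order by order for $b_{d,m}$, the coefficient $b_{d,m}$ appears linearly on the left-hand side with a factor coming from differentiating $z^d$ — namely a factor that is (up to sign) a power of $d$, concretely the "new" occurrence of $b_{d,m}$ enters multiplied by $d$ (from the term $\binom{d}{1}z^{d-1}\cdot(\text{lower part})$ in expanding $(z + \sum b_{d,m}z^{-m})^d$), while the remaining terms involve only $b_{d,0},\dots,b_{d,m-1}$ and integer coefficients (binomial coefficients and the integer $c$-data, which here is just the identity parameter). Solving for $b_{d,m}$ therefore divides a $d$-adic rational (by induction) by $d$, and $d^{\mathbb{Z}^+}\cdot\Z$ is closed under this. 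I would set up the induction on $m$ with base case $b_{d,0}$, which one computes explicitly to be $0$ (or a manifestly $d$-adic value), and then carry the inductive step through the coefficient extraction.

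I would make precise the claim that "$d$-adic rationals form a ring closed under division by $d$": $\Z[1/d] = \{kd^{-n} : k\in\Z, n\in\Z^+\}$ is a subring of $\Q$, sums and products of its elements stay inside, and dividing by $d$ only increases $n$. That bookkeeping is routine. The genuinely delicate step, and the one I expect to be the main obstacle, is establishing rigorously that in the recursion the coefficient $b_{d,m}$ never needs to be divided by anything other than a power of $d$ — i.e. that no extra prime can enter the denominator. This requires being careful about exactly which functional equation one uses (the one for $\Psi_d$ directly, versus one for $\Phi_d'/\Phi_d$ as in Levin's algorithm, versus Jungreis's original formulation) and verifying that in the chosen formulation the "pivot" term that isolates $b_{d,m}$ really does come with a pure power-of-$d$ coefficient; a sloppy choice of normalization could introduce spurious denominators that one would then have to argue cancel. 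Once the correct recursion is pinned down, the induction closes immediately.
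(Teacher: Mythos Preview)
The paper does not prove this proposition; it is simply cited from Jungreis \cite{jungreis}. So there is no in-paper proof to compare against, but your outline is recognizably aiming at Jungreis's method.

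There is, however, a concrete gap. The functional equation you write, $\Psi_d(w^d) = \Psi_d(w)^d + \Psi_d(w)$, is not merely imprecise but false: the left-hand side is the \emph{parameter} $c'$ with $\Phi_d(c') = w^d$, and that has nothing to do with iterating $P_{d,c}$ at $c=\Psi_d(w)$. (Sanity check for $d=2$: the left side is $w^2 + O(1)$ while the right side is $w^2 + w + O(1)$.) There is no single autonomous identity for $\Psi_d$ of this shape. What Jungreis actually uses is the tower $\psi_n(w) := P_{d,c}^{\circ n}(c)$ with $c=\Psi_d(w)$, satisfying $\psi_0 = \Psi_d$, $\psi_{n+1} = \psi_n^{\,d} + \Psi_d$, together with the B\"ottcher asymptotic $\psi_n(w)/w^{d^n} \to 1$. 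It is the vanishing, for $n$ large relative to $m$, of suitably many sub-leading Laurent coefficients of $\psi_n$ that produces an equation in which $b_{d,m}$ appears with a pure power-of-$d$ pivot and the remaining terms are integer polynomials in $b_{d,0},\dots,b_{d,m-1}$. Your strategic picture---induction on $m$, pivot coefficient a power of $d$, closure of $\Z[1/d]$ under the operations involved---is correct, but it has to be run on this tower recursion, not on the single equation you wrote. You do flag the pivot issue as the delicate step; once you replace the false identity by the correct $\psi_n$ setup, the rest of your plan goes through.
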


Moreover, Jungreis \cite{jungreis} showed that some of these coefficients are $0$, which we call \textit{zero-coefficients}.
Several detailed investigations on $b_{d,m}$ are presented in \cite{laurent, coef, area, levin} for the case $d=2$, and in \cite{levin2,sym,yamp,yamashita} for general $d$.
In particular, Bielefeld, Fisher and Haeseler presented Zagier's observations in \cite[pp. 32-33]{laurent}.
One of the observations is the necessary and sufficient condition for the coefficients $b_{2,m}$ to be zero.
Some information about the zero-coefficients can be found in \cite{laurent, levin, levin2, sym, yamp, yamashita}.
However, the necessary and sufficient condition for the zero-coefficients is still unknown.
The other observations are related to the growth of the denominator of $b_{2,m}$.
In this paper, we focus on the denominator of $b_{d,m}$.

We prepare the notation about the $p$-adic valuation.
For any prime number $p$ and every non-zero rational number $x$, there exists a unique integer $v$ such that $x=p^v r/q$ with integers $r$ and $q$ that are not divisible by $p$.
The $p$-adic valuation $\nu_p:\Q \setminus \{0\} \rightarrow \Z$ is defined as $\nu_p(x)=v.$
We extend $\nu_p$ to $\Q$ as follows:
\begin{eqnarray}
 \nonumber
 \nu_p (x)=
  \begin{cases}
    v & \text{ for } x \in \Q \setminus \{0\},\\
    +\infty & \text{ for } x = 0.\\
  \end{cases}
\end{eqnarray}
For any real number $x$, we set the floor function $\left \lfloor x \right \rfloor := \max\{ m \in \Z : m \leq x  \}.$
We would like to note that the floor function and the $p$-adic valuation have the following properties (see e.g. {\cite[pp.69-72,  and pp.102-114]{gra}}).

\begin{lem}\label{pro:prime}
Let $x,y \in \R$ and $p$ be a prime number. 
The floor function and the $p$-adic valuation satisfy the following:
\begin{eqnarray}
&& \label{eq:ineq0} \left \lfloor x \right \rfloor + m \leq \left \lfloor x + m \right \rfloor \text{ for } m \in \Z^+.\\
&& \label{eq:ineq1} \left \lfloor x \right \rfloor + \left \lfloor y \right \rfloor \leq \left \lfloor x + y \right \rfloor.\\
&& \label{eq:ineq2} \left \lfloor \frac{\left \lfloor x \right \rfloor}{m} \right \rfloor  = \left \lfloor \frac{x}{m} \right \rfloor \text{ for } m \in \Z^+.\\
&& \label{eq:ineq4} \nu_p (mn) = \nu_p (m) + \nu_p (n) \text{ for } m,n \in \Z.\\
&& \label{eq:ineq5}\nu_p (m/n) = \nu_p (m) - \nu_p (n) \text{ for } m \in \Z, n \in \Z \setminus \{0\}.\\
&& \label{eq:ineq6} \nu_p (m+n) \geq \min\{ \nu_p (m), \nu_p (n) \} \text{ for } m, n \in \Z.\\
&& \label{eq:ineq3} \nu_p (m!) = \displaystyle \sum_{l=1}^{\infty} \left \lfloor \frac{m}{ p^l } \right \rfloor \text{ for } m \in \Z^+.
\end{eqnarray}
\end{lem}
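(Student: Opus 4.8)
The plan is to verify the seven items one at a time, each directly from the definitions, using earlier items to shorten the later arguments. Nothing here is deep — it is pure bookkeeping, and one could simply refer to \cite[pp.69--72 and pp.102--114]{gra} — but the verifications run as follows.

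For \eqref{eq:ineq0} and \eqref{eq:ineq1} I would use that $\left\lfloor t\right\rfloor$ is by definition the largest integer not exceeding $t$. Since $\left\lfloor x\right\rfloor+m$ is an integer with $\left\lfloor x\right\rfloor+m\le x+m$, maximality forces $\left\lfloor x\right\rfloor+m\le\left\lfloor x+m\right\rfloor$ (indeed equality when $m\in\Z$, hence certainly for $m\in\Z^+$); applying the same principle to the integer $\left\lfloor x\right\rfloor+\left\lfloor y\right\rfloor\le x+y$ gives \eqref{eq:ineq1}. For \eqref{eq:ineq2}, put $k:=\left\lfloor x/m\right\rfloor$, so $km\le x<(k+1)m$; then $km\in\Z$ and $km\le x$ give $km\le\left\lfloor x\right\rfloor$, while $\left\lfloor x\right\rfloor\le x<(k+1)m$ gives $\left\lfloor x\right\rfloor<(k+1)m$, and dividing by the positive integer $m$ yields $k\le\left\lfloor x\right\rfloor/m<k+1$, i.e. $\left\lfloor\left\lfloor x\right\rfloor/m\right\rfloor=k=\left\lfloor x/m\right\rfloor$.

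For the valuation identities I would write $m=p^{\nu_p(m)}m'$ and $n=p^{\nu_p(n)}n'$ with $p\nmid m'$ and $p\nmid n'$, using the conventions $\nu_p(0)=+\infty$ and $+\infty+a=+\infty$ in the degenerate cases $m=0$ or $n=0$. Primality of $p$ gives $p\nmid m'n'$, so $mn=p^{\nu_p(m)+\nu_p(n)}m'n'$ proves \eqref{eq:ineq4}; then \eqref{eq:ineq5} follows because $\nu_p(m/n)+\nu_p(n)=\nu_p\!\big((m/n)\,n\big)=\nu_p(m)$ by \eqref{eq:ineq4} (read on $\Q$ in the obvious way). For \eqref{eq:ineq6}, assuming $\nu_p(m)\le\nu_p(n)$ without loss of generality, write $m+n=p^{\nu_p(m)}\big(m'+p^{\nu_p(n)-\nu_p(m)}n'\big)$ with the bracketed factor an integer, whence $\nu_p(m+n)\ge\nu_p(m)=\min\{\nu_p(m),\nu_p(n)\}$.

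Finally, for Legendre's formula \eqref{eq:ineq3} I would start from $\nu_p(m!)=\sum_{j=1}^{m}\nu_p(j)$ together with $\nu_p(j)=\#\{\,l\ge1:p^{l}\mid j\,\}$, and then interchange the two finite sums: $\sum_{j=1}^{m}\sum_{l\ge1}[\,p^{l}\mid j\,]=\sum_{l\ge1}\#\{\,1\le j\le m:p^{l}\mid j\,\}=\sum_{l\ge1}\left\lfloor m/p^{l}\right\rfloor$, the series terminating as soon as $p^{l}>m$. The only points that call for any care — hence the closest thing to an ``obstacle'' here — are keeping the strict and non-strict inequalities in \eqref{eq:ineq2} on the correct sides, and handling the degenerate cases $m=0$, $n=0$, $m+n=0$ in \eqref{eq:ineq4}--\eqref{eq:ineq6} through the convention $\nu_p(0)=+\infty$.
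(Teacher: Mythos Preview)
Your verifications are correct and entirely standard; the paper itself does not prove this lemma at all but simply cites \cite[pp.~69--72 and pp.~102--114]{gra}, which you also acknowledge. Since you supply exactly the elementary arguments one would find (or reconstruct) from that reference, your approach is essentially the same as the paper's, only with the details spelled out.
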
 

Zagier's observation for the growth of the denominator is as follows:

\begin{ob}[{\cite[Observation (ii)]{laurent}}]\label{ob:z}
For $0 \leq m \leq 1000$, 
$$ -\nu_{2} (b_{2,m}) \leq \nu_{2} ((2m+2)!) $$
holds. Furthermore, equality holds if and only if $m=0$ or $m$ is odd.
\end{ob}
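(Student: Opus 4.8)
The statement as quoted constrains only $0\le m\le 1000$, and for this range it is verifiable by direct computation: by Proposition~\ref{pro:jun} each $b_{2,m}$ is a dyadic rational, and (from the Douady--Hubbard product $\Phi_d(c)=c\prod_{n\ge 1}\bigl(1+c\,(P_{d,c}^{\circ(n-1)}(c))^{-2}\bigr)^{2^{-n}}$ specialised to $d=2$, equivalently from Jungreis's recursion) the $m$-th Laurent coefficient of $\Psi_2$ depends on only $O(\log m)$ of these factors, so for $m\le 1000$ at most ten levels of the recursion are involved and every inequality and equality can be checked term by term. I would instead prove the uniform assertion --- $-\nu_2(b_{2,m})\le\nu_2((2m+2)!)$ for all $m\in\Z^+$, with equality exactly when $m=0$ or $m$ is odd --- from which the quoted range is immediate.

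First I would make the recursion explicit together with its $2$-adic skeleton. Inverting the product above (this is essentially Jungreis's recursion) writes $b_{2,m}$ as a $\Z$-linear combination of monomials $b_{2,k_1}\cdots b_{2,k_j}$ that is \emph{isobaric of weight $m+1$}, i.e. $\sum_i(k_i+1)=m+1$ for every occurring monomial, the coefficients being assembled from the binomial series $(1+y)^{2^{-n}}$; the only arithmetic fact needed about those is
\[
  \nu_2\!\left(\binom{2^{-n}}{l}\right)=-(n+1)l+s_2(l)\qquad(l\ge 1),
\]
where $s_2(\cdot)$ denotes the sum of binary digits --- in particular $\nu_2\!\bigl(\binom{1/2}{l}\bigr)=-\nu_2((2l)!)$. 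The inequality then follows by induction on $m$: assuming $-\nu_2(b_{2,k})\le\nu_2((2k+2)!)$ for all $k<m$, every monomial satisfies
\[
  \nu_2\bigl(b_{2,k_1}\cdots b_{2,k_j}\bigr)\ \ge\ -\sum_i\nu_2\bigl((2(k_i+1))!\bigr)\ \ge\ -\nu_2\bigl((2m+2)!\bigr),
\]
using repeatedly the super-additivity $\nu_2((2a)!)+\nu_2((2b)!)\le\nu_2((2a+2b)!)$ (i.e. $\nu_2\binom{2a+2b}{2a}\ge 0$) together with $\sum_i(k_i+1)=m+1$; combining the monomials via \eqref{eq:ineq6}, absorbing the rational coefficients via \eqref{eq:ineq4} and \eqref{eq:ineq5}, and cross-checking the bookkeeping against Legendre's formula \eqref{eq:ineq3} yields $-\nu_2(b_{2,m})\le\nu_2((2m+2)!)$.

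For the equality statement I would isolate the contribution of the first factor of the product. The inverse of $c\mapsto\sqrt{c^2+c}$ is $\varphi\mapsto\tfrac12\bigl(-1+\sqrt{1+4\varphi^2}\bigr)=\varphi-\tfrac12+\sum_{k\ge1}4^{-k}\binom{1/2}{k}\varphi^{1-2k}$, whose coefficient of $\varphi^{-m}$ vanishes for even $m\ge2$ and equals $4^{-(m+1)/2}\binom{1/2}{(m+1)/2}$ for odd $m$; from the displayed identity and $\nu_2((4k)!)=2k+\nu_2((2k)!)$ one checks that this coefficient has $2$-adic valuation exactly $-\nu_2((2m+2)!)$ whenever $m$ is odd (and for $m=0$, where the constant term $-\tfrac12$ has $\nu_2=-1=-\nu_2(2!)$). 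Since every remaining factor $n\ge2$ of the product begins at order $c^{-(2^n-1)}$, its influence on $b_{2,m}$ carries a strictly larger $2$-adic denominator: for $m$ even $\ge2$ this accounts for all of $b_{2,m}$, so the inequality is strict, while for $m$ odd it cannot cancel the extremal first-factor term, so equality holds.

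The induction for the inequality is the soft part --- ``bounded dyadic denominators propagate'' once the recursion is in hand. The real work lies in making the recursion fully explicit (pinning down the isobaric weight and the integrality of the coefficients once the binomial denominators are pulled out) and in proving the final assertion, that the contribution of the factors $n\ge2$ has $2$-adic valuation \emph{strictly} above $-\nu_2((2m+2)!)$ for even $m$ and cannot cancel the extremal term for odd $m$. This last estimate --- where the parity of $m$ genuinely enters and where one needs more than divisibility --- I expect to be the main obstacle.
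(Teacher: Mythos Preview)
Your route differs substantially from the paper's, and as written it has real gaps. The paper does not argue by induction on $m$ through the Douady--Hubbard product; it derives Observation~\ref{ob:z} (for all $m$) as an immediate corollary of Theorem~\ref{th:main}, whose proof rests on the Ewing--Schober closed formula (Corollary~\ref{th:comb}): $b_{d,m}$ is a \emph{finite} sum, over tuples $(j_1,\dots,j_n)$ with $\sum_k(d^{n-k+1}-1)j_k=m+1$, of products $\prod_k C_{j_k}(m/d^{n-k+1}-\cdots)$. Each product's $p_i$-adic valuation is estimated directly, and the unique extremal tuple turns out to be $(0,\dots,0,a)$, contributing the single term $-\tfrac{1}{m}C_a(m/d)$. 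For $d=2$ this is exactly your ``first-factor'' coefficient $4^{-(m+1)/2}\binom{1/2}{(m+1)/2}$, so your identification of the extremal contribution is morally right; the paper simply reaches it by a static term-by-term estimate inside a closed formula rather than by peeling off one factor of an infinite product and running an induction.

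The gap in your inductive step is that you never actually set up the recursion you invoke. You call it a ``$\Z$-linear combination'' of isobaric monomials in the $b_{2,k}$, yet in the same breath say the coefficients come from $\binom{2^{-n}}{l}$, which are not integers and carry $\nu_2=-(n+1)l+s_2(l)<0$. Your bound $\nu_2(b_{2,k_1}\cdots b_{2,k_j})\ge-\nu_2((2m+2)!)$ on the monomials alone does not survive multiplication by such coefficients, so the induction as stated does not close --- and no recursion expressing $b_{2,m}$ purely in terms of lower $b_{2,k}$'s is exhibited (neither Jungreis's algorithm nor the product inversion has this form). In the equality discussion you also write that the higher factors carry ``a strictly larger $2$-adic denominator''; you need the opposite (strictly \emph{larger} $\nu_2$, i.e.\ smaller denominator) for the argument to work, both to make the inequality strict for even $m$ and to prevent cancellation for odd $m$. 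In the paper's framework this strictness is not a separate obstacle: once every term is displayed via Corollary~\ref{th:comb}, the same chain of inequalities that gives the bound shows that every tuple other than $(0,\dots,0,a)$ is strictly sub-extremal, so equality and non-cancellation fall out simultaneously.
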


We would like to note the following inequality presented by Ewing and Schober in \cite{area}.
\begin{thm}[{\cite[Theorem 1]{area}}]
For any $m \in \Z^+$, $ -\nu_{2} (b_{2,m}) \leq 2m+1 $ holds.
\end{thm}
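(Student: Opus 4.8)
The plan is to prove the bound by induction on $m$, starting from the recursion for the coefficients $b_{2,m}$ that underlies Jungreis's algorithm \cite{jungreis} and tracking $2$-adic valuations by means of Lemma~\ref{pro:prime}. First I would make that recursion explicit. For $c\notin\M_2$ let $\varphi_c$ be the B\"ottcher coordinate of $P_{2,c}$ and set $\psi_c:=\varphi_c^{-1}$, an odd power series normalized by $\psi_c(\zeta)/\zeta\to 1$ and satisfying $\psi_c(\zeta^2)=\psi_c(\zeta)^2+c$. Writing $\psi_c(\zeta)=\zeta+\sum_{l\ge 1}G_l(c)\,\zeta^{-(2l-1)}$ and comparing coefficients in this functional equation, one gets $G_1(c)=-c/2$ and
\[
G_l=\tfrac12\Bigl([\,l\text{ even}\,]\,G_{l/2}-\sum_{p+q=l,\ p,q\ge 1}G_pG_q\Bigr)\qquad(l\ge 2),
\]
whence each $G_l$ is a polynomial in $c$ with $\deg G_l\le l$ and $G_l(0)=0$. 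Since $\Phi_2(c)=\varphi_c(c)$, the identity $\psi_c(\Phi_2(c))=c$ becomes, after putting $w=\Phi_2(c)$ and $c=\Psi_2(w)=w+\sum_{m\ge 0}b_{2,m}w^{-m}$,
\[
\sum_{m\ge 0}b_{2,m}\,w^{-m}=\sum_{l\ge 1}w^{-(2l-1)}\,G_l\!\bigl(\Psi_2(w)\bigr).
\]
Comparing coefficients of $w^{-m}$ and writing $G_l=\sum_i\gamma_{l,i}c^i$ with $\gamma_{l,i}$ dyadic rationals, one obtains $b_{2,0}=-\tfrac12$ and, for $m\ge 1$,
\[
b_{2,m}=-\tfrac12\,b_{2,m-1}+\sum_{l=2}^{m+1}\ \sum_{i=1}^{l}\gamma_{l,i}\,[w^{-m+2l-1}]\,\Psi_2(w)^i,
\]
where $[w^{-N}]F$ denotes the coefficient of $w^{-N}$ in the Laurent series $F$.

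Next I would verify $\nu_2(\gamma_{l,i})\ge-(2l-1)$ for all $l\ge 1$ by induction on $l$; the case $l=1$ is $\gamma_{1,1}=-\tfrac12$. For $l\ge 2$ the coefficient of $c^i$ in $G_l$ is $\tfrac12$ times the difference of $[\,l\text{ even}\,]\,\gamma_{l/2,i}$ and a sum of products $\gamma_{p,j}\gamma_{q,k}$ with $p+q=l$; by the inductive hypothesis $\nu_2(\gamma_{l/2,i})\ge-(l-1)$ and $\nu_2(\gamma_{p,j}\gamma_{q,k})\ge-(2l-2)$, so \eqref{eq:ineq4}--\eqref{eq:ineq6} give valuation $\ge-1-(2l-2)=-(2l-1)$, using $l\ge 2$.

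The main induction is on $m$, the assertion being $\nu_2(b_{2,m})\ge-(2m+1)$; the base case is $b_{2,0}=-\tfrac12$. Assume $\nu_2(b_{2,k})\ge-(2k+1)$ for all $k<m$. The term $-\tfrac12 b_{2,m-1}$ has valuation $\ge-(2m-1)-1=-2m$. In the double sum, expand $\Psi_2(w)^i=\bigl(w+\sum_{k\ge 0}b_{2,k}w^{-k}\bigr)^i$: a monomial contributing to $[w^{-m+2l-1}]\Psi_2(w)^i$ is $\binom{i}{p,q_0,q_1,\dots}\prod_k b_{2,k}^{q_k}$ with $p+\sum_k q_k=i$, and matching the power of $w$ then forces $\sum_k(k+1)q_k=i+m-2l+1$, independently of $p$. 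Since $i\le l$, this common value is $\le m-l+1\le m-1$, so every index $k$ occurring satisfies $k<m$ and the inductive hypothesis applies; as multinomial coefficients are positive integers, \eqref{eq:ineq4} and \eqref{eq:ineq5} give
\[
\nu_2\!\Bigl(\tbinom{i}{p,q_0,\dots}\prod_k b_{2,k}^{q_k}\Bigr)\ \ge\ -\sum_k(2k+1)q_k\ =\ -2\bigl(i+m-2l+1\bigr)+\sum_k q_k\ \ge\ -2\bigl(i+m-2l+1\bigr).
\]
Multiplying by $\gamma_{l,i}$ and using the previous paragraph, each summand of $b_{2,m}$ has valuation at least $-(2l-1)-2(i+m-2l+1)=2(l-i)-(2m+1)\ge-(2m+1)$, since $i\le l$. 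Hence \eqref{eq:ineq6} applied to the finite sum gives $\nu_2(b_{2,m})\ge\min\{-2m,\,-(2m+1)\}=-(2m+1)$, which is the claim.

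I expect the real work to be in the setup rather than the induction: casting the recursion into the form above, establishing $\deg G_l\le l$ together with the valuation bound on the $\gamma_{l,i}$, and---most importantly---observing that the ``weight'' $i+m-2l+1$ that controls the valuation of $[w^{-m+2l-1}]\Psi_2(w)^i$ is the same for every way of distributing the $i$ factors. That last fact is exactly what forces the final estimate to be $-(2m+1)$ and not something weaker. With it in hand, both inductions are routine applications of Lemma~\ref{pro:prime}; one should also record that every sum involved is finite, which follows from $\deg G_l\le l$ and $0\le\sum_k(k+1)q_k\le m-1$.
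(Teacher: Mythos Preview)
Your argument is correct. The B\"ottcher recursion, the degree and valuation bounds on the $\gamma_{l,i}$, and the ``weight'' identity $\sum_k(k+1)q_k=i+m-2l+1$ all check out; in particular the latter forces every $k$ occurring in the expansion (for $l\ge2$, $i\le l$) to satisfy $k\le m-2$, so the induction hypothesis applies and the final valuation estimate goes through exactly as you wrote.

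However, this is a different route from the paper's. The paper does not prove this particular theorem directly (it is quoted from Ewing--Schober~\cite{area}); rather, it follows from the paper's Theorem~\ref{th:main} with $d=2$, which gives the sharper bound $-\nu_2(b_{2,m})\le\nu_2((m+1)!)+(m+1)=\nu_2((2m+2)!)\le 2m+1$. That proof proceeds from the closed-form Ewing--Schober formula (Corollary~\ref{th:comb}), writing $b_{d,m}$ as a finite sum of products of generalized binomial coefficients $C_{j_k}(\alpha)$ and estimating the $p$-adic valuation of each factor directly. Your approach is more self-contained and elementary---it needs only the functional equation $\psi_c(\zeta^2)=\psi_c(\zeta)^2+c$ and $\psi_c(\Phi_2(c))=c$, not the integral representation (Lemma~\ref{lem:int}) or the combinatorial expansion---and it yields the Ewing--Schober bound cleanly. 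The price is that it does not seem to reach the sharper Zagier bound $\nu_2((2m+2)!)$ or identify the equality cases, which the paper's method does because the explicit formula isolates a single dominant term $(j_1,\dots,j_{n-1},j_n)=(0,\dots,0,a)$.
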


Furthermore Levin \cite{levin2} showed that equality holds in Zagier's observation if $m$ is odd.
\begin{thm}[{\cite[Theorem on p. 3520]{levin2}}]
If $m \in \Z^+$ is an odd integer, then $ -\nu_{2} (b_{2,m}) = \nu_{2} ((2m+2)!)  $ holds.
\end{thm}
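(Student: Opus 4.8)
The plan is to pass from $\Psi_2$ to $\Phi_2$ by Lagrange inversion and then read the $2$-adic valuation off a single explicit sum. Write $\Phi_2(z)=z+\sum_{k\ge 0}\alpha_k z^{-k}$ and $\phi(z):=\Phi_2(z)/z=1+\sum_{k\ge 0}\alpha_k z^{-k-1}$. Substituting $w=\Phi_2(z)$ in $b_{2,m}=\frac{1}{2\pi i}\oint\Psi_2(w)w^{m-1}\,dw$ and integrating by parts gives, for $m\ge 1$, the Lagrange--B\"urmann identity
\begin{equation*}
b_{2,m}=-\frac1m\,[z^{-1}]\,\Phi_2(z)^m=-\frac1m\,[z^{-m-1}]\,\phi(z)^m
=-\frac1m\sum_{r\ge 1}\binom{m}{r}\sum_{\substack{k_1+\cdots+k_r=m+1-r\\ k_l\ge 0}}\alpha_{k_1}\cdots\alpha_{k_r},
\end{equation*}
$[z^{-1}]$ denoting the coefficient of $z^{-1}$ in the Laurent expansion. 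Everything thus rests on the arithmetic of the $\alpha_k$, and the crucial input is the lemma that $\nu_2(\alpha_k)=-\nu_2((2k+2)!)$ for every $k\ge 0$, with $2^{\nu_2((2k+2)!)}\alpha_k$ a $2$-adic unit.

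To prove this lemma I would use the product formula underlying Douady--Hubbard's construction, $\Phi_2(c)=\varphi_c(c)$ with $\varphi_c(z)=z\prod_{j\ge 0}\bigl(1+c/(P_{2,c}^{\circ j}(z))^2\bigr)^{2^{-j-1}}$ the B\"ottcher coordinate at $\infty$ of $z\mapsto z^2+c$. Splitting off the $j=0$ factor gives $\varphi_c(z)=z\,(1+c/z^2)^{1/2}\,E_c(z)$ with $E_c(z)=1+O(z^{-4})$; writing $\varphi_c(z)=z\,h_c(z^{-2})$ (a power series in $z^{-2}$), the leading-in-$c$ term of the polynomial $[v^n]h_c(v)$ is $\binom{1/2}{n}c^n$, and a short computation with \eqref{eq:ineq3} gives $\nu_2\binom{1/2}{n}=-\nu_2((2n)!)$. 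The content of the lemma is that every lower-degree-in-$c$ coefficient of $[v^n]h_c(v)$ has $\nu_2\ge -\nu_2((2n)!)+1$: the tail $E_c$ contributes only the dyadic denominators $2^{-j-1}$ ($j\ge 1$) together with the denominators arising in expanding the pertinent logarithms and exponentials, and these are $2$-adically negligible against $\nu_2((2n)!)$. As $\alpha_k$ is the sum over $\lceil(k+1)/2\rceil\le n\le k+1$ of the $[c^{\,2n-1-k}]$-coefficients of $h_c$, only the one with $n=k+1$ being leading-in-$c$, the lemma then follows from \eqref{eq:ineq6}. I expect this step to be the main obstacle: one has to bound the $2$-adic size of $E_c$ sharply and uniformly in $k$, so as to guarantee that the square-root factor alone produces the extremal denominator. (The same estimate also reproves the growth bound $-\nu_2(b_{2,m})\le\nu_2((2m+2)!)$ of \cite{laurent,yamashita}; alternatively, the lemma can be obtained by induction from the functional equation $h_c(v)^2=(1+cv)\,h_c\bigl(v^2/(1+cv)^2\bigr)$ via Lemma~\ref{pro:prime}.)

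Granting the lemma, the theorem follows by bookkeeping with Lemma~\ref{pro:prime}. For any tuple in the displayed sum, $\sum_{l=1}^{r}(2k_l+2)=2(m+1-r)+2r=2m+2$, so superadditivity of $n\mapsto\nu_2(n!)$ (immediate from \eqref{eq:ineq3} and \eqref{eq:ineq1}) gives $\sum_l\nu_2((2k_l+2)!)\le\nu_2((2m+2)!)$; with $\nu_2\binom mr\ge 0$ and the lemma, every summand has $\nu_2\ge-\nu_2((2m+2)!)$, and a summand attains this value exactly when $\binom mr$ is odd and (Kummer's theorem) the integers $k_1+1,\dots,k_r+1$ have pairwise disjoint binary expansions whose union is that of $m+1$; modulo $2$ every such summand equals $1$ by the lemma. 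Hence, $m$ being odd so that $1/m$ is a $2$-adic unit, $2^{\nu_2((2m+2)!)}(-m)b_{2,m}$ is a $2$-adic integer congruent mod $2$ to the number of these extremal tuples, namely
\begin{equation*}
\sum_{\substack{r\ge 1\\ \binom mr\ \text{odd}}}r!\,S\bigl(s_2(m+1),r\bigr)\pmod 2,
\end{equation*}
where $S$ is the Stirling number of the second kind and $s_2$ the base-$2$ digit sum, each ordered set partition of the $s_2(m+1)$ one-bits of $m+1$ into $r$ nonempty blocks encoding one extremal tuple via $k_l+1=\sum_{e\in\text{block }l}2^e$. By \eqref{eq:ineq3}, $\nu_2(r!)=\lfloor r/2\rfloor+\lfloor r/4\rfloor+\cdots>0$ for $r\ge 2$, so only $r=1$ can contribute an odd term to this sum; and it does, since $m$ is odd (whence $\binom m1=m$ is odd) and $S(s_2(m+1),1)=1$. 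The count is therefore odd, $2^{\nu_2((2m+2)!)}(-m)b_{2,m}$ is a $2$-adic unit, and $\nu_2(b_{2,m})=-\nu_2((2m+2)!)$, as asserted. (For even $m>0$ the $r=1$ term is absent and the count is even, which is consistent with equality failing there.)
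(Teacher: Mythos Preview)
Your route is genuinely different from the paper's, and the part you flag as ``the main obstacle'' is in fact an unfilled gap.

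The paper does not pass through $\Phi_2$ at all. It works directly with the Ewing--Schober expansion of $b_{d,m}$ (Corollary~\ref{th:comb}),
\[
b_{d,m}=-\frac1m\sum_{(j_1,\dots,j_n)\in\mathbb J}\prod_{k=1}^n C_{j_k}\!\left(\frac m{d^{\,n-k+1}}-d^{k-1}j_1-\cdots-dj_{k-1}\right),
\]
and bounds $\nu_{p_i}$ of each factor via the elementary estimate \eqref{eq:cja}. When $p_i\nmid m$ (for $d=2$: $m$ odd) the analysis shows that a \emph{single} tuple $(0,\dots,0,a)$ with $a=(m+1)/(d-1)$ attains the extremal valuation $-\nu_{p_i}(a!)-t_ia$ and that every other tuple falls strictly short; since $C_a(m/d)\ne 0$, this one nonzero term fixes $-\nu_{p_i}(b_{d,m})$ exactly. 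For $d=p=2$ this gives $-\nu_2(b_{2,m})=(m+1)+\nu_2((m+1)!)=\nu_2((2m+2)!)$ by Legendre's formula, i.e.\ Levin's equality. No auxiliary lemma about the coefficients of $\Phi_2$ is needed, and the argument works uniformly for all $d$.

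Your Lagrange--B\"urmann reduction and the parity count at the end are correct: once one knows that $2^{\nu_2((2k+2)!)}\alpha_k$ is a $2$-adic unit for every $k$, the extremal-term count $\sum_{r:\binom mr\text{ odd}}r!\,S(s_2(m{+}1),r)$ is odd precisely when $m$ is odd, and the conclusion follows. The difficulty is that your key lemma $\nu_2(\alpha_k)=-\nu_2((2k+2)!)$ is only sketched. The functional-equation induction you propose, $h_c(v)^2=(1+cv)\,h_c(v^2/(1+cv)^2)$, gives on comparing $v^n$-coefficients a recursion of the form $2A_n=-\sum_{i=1}^{n-1}A_iA_{n-i}+\sum_{2m\le n}A_m\binom{1-2m}{\,n-2m\,}c^{\,n-2m}$; the naive hypothesis $\nu_2(A_m)\ge -\nu_2((2m)!)$ together with superadditivity of $\nu_2((\cdot)!)$ only yields $\nu_2(2A_n)\ge-\nu_2((2n)!)$, hence $\nu_2(A_n)\ge-\nu_2((2n)!)-1$, one unit short of what you need. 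Closing this gap requires a sharper invariant (tracking the valuation coefficient-by-coefficient in $c$, not just of $A_n$ as a whole) and a separate argument that the cross terms $A_iA_{n-i}$ and the $A_m$-terms never conspire to hit the lower bound without the extra factor of $2$. That refinement is doable but is real work, comparable in length to the paper's entire proof of Theorem~\ref{th:main}; as written, your sketch does not supply it.

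In short: the combinatorial endgame is sound, but the proof stands or falls with the $\alpha_k$-lemma, which you have not proved. The paper sidesteps this entirely by estimating $b_{d,m}$ directly from the iterated-polynomial formula, isolating one dominant term rather than needing a mod-$2$ count.
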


Yamashita \cite{yamashita} defined the order of integers with respect to $d \in \N \setminus \{1\}$  and gave an estimate of the growth of the denominator of $b_{d,m}$ (see \cite[Theorem 4.30]{yamashita}).
However, the theorem was incorrect.
We learnt from Yamashita that there was a mistake in the calculation.
However, if we restrict $d$ to be a prime number, his result is valid.
Using the $p$-adic valuation, his statement can be presented in the following way.

\begin{thm}[{\cite[Theorem 4.30]{yamashita}}]\label{thm:yame}
Let $m \in \Z^+$ and $p$ be a prime number.
If $(p-1) \mid (m+1)$,
$$ -\nu_{p} (b_{p,m}) \leq \left \lfloor \frac{\nu_{p} ((pm+p)!) }{p-1} \right \rfloor$$
holds.
Otherwise, $-\nu_{p} (b_{p,m})=-\infty$ holds. 
Furthermore, if $(p-1) \mid (m+1)$, equality holds precisely when $m=p-2$ or $p \nmid m$.
\end{thm}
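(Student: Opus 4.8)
The plan is to treat the two alternatives of the statement in turn. The clause for $(p-1)\nmid(m+1)$ is a symmetry statement: if $\zeta^{p-1}=1$ then, using $\zeta^p=\zeta$, one has $P_{p,\zeta c}(\zeta z)=\zeta\,P_{p,c}(z)$, so $P_{p,\zeta c}$ and $P_{p,c}$ are conjugate by $z\mapsto\zeta z$, whence $\J_{P_{p,\zeta c}}=\zeta\,\J_{P_{p,c}}$ is connected exactly when $\J_{P_{p,c}}$ is, i.e. $\M_p$ is invariant under $c\mapsto\zeta c$. Then $c\mapsto\zeta^{-1}\Phi_p(\zeta c)$ is again a conformal homeomorphism $\CC\setminus\M_p\to\D^*$ (using this invariance of $\M_p$), and since $\zeta^{-1}\Phi_p(\zeta c)/c=\Phi_p(\zeta c)/(\zeta c)\to1$ as $c\to\infty$, uniqueness of the normalized uniformizer gives $\Phi_p(\zeta c)=\zeta\,\Phi_p(c)$, hence $\Psi_p(\zeta w)=\zeta\,\Psi_p(w)$. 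Comparing coefficients of $w^{-m}$ forces $b_{p,m}=\zeta^{m+1}b_{p,m}$ for every such $\zeta$, so $b_{p,m}=0$ whenever $(p-1)\nmid(m+1)$, giving $\nu_p(b_{p,m})=+\infty$ and $-\nu_p(b_{p,m})=-\infty$. Assume from now on that $(p-1)\mid(m+1)$; by Legendre's formula \eqref{eq:ineq3}, $\nu_p((pm+p)!)=(m+1)+\nu_p((m+1)!)$, so writing $m+1=(p-1)l$ and $\beta_l:=b_{p,(p-1)l-1}$ the target reduces to $-\nu_p(\beta_l)\le l+\lfloor\nu_p(((p-1)l)!)/(p-1)\rfloor$, with equality claimed precisely when $l=1$ (i.e. $m=p-2$) or $p\nmid(l+1)$ (i.e. $p\nmid m$).

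For the inequality I would use Jungreis's recursion. Recall $\Phi_p(c)=\varphi_c(c)$, where $\varphi_c$ is the B\"{o}ttcher coordinate of $P_{p,c}$ at $\infty$; its inverse $g_c=\varphi_c^{-1}$, written $g_c(\zeta)=\zeta+\sum_{n\ge1}g_n(c)\zeta^{-n}$, satisfies $g_c(\zeta^p)=g_c(\zeta)^p+c$, and comparing coefficients of $\zeta^{-N}$ in this identity determines the polynomials $g_n\in\Z[1/p][c]$ recursively, the top term $\binom{p}{p-1}g_{N+p-1}=p\,g_{N+p-1}$ showing that each $g_M$ is $p^{-1}$ times an integral multinomial combination of products of strictly lower-index coefficients (and, when $p\mid(M+1)$, of $g_{(M+1)/p-1}$ as well). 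Substituting $c=\Psi_p(w)$ into $\Psi_p(w)=g_{\Psi_p(w)}(w)$ turns this into a recursion for the $\beta_l$. The base case $l=1$, i.e. $m=p-2$, is the computation $\Phi_p(c)=c+\tfrac1p c^{2-p}+O(c^{3-2p})$, whence $\beta_1=-1/p$ and $-\nu_p(\beta_1)=1$, matching the bound. For the inductive step I would estimate the recursion term by term using three inputs: $\nu_p(\binom{p}{j})=1$ for $1\le j\le p-1$ (Kummer), so every multinomial coefficient occurring has $p$-adic valuation $0$ or $1$; $\sum_i\nu_p(((p-1)l_i)!)\le\nu_p(((p-1)l)!)$ whenever $\sum_i l_i\le l$, by integrality of multinomial coefficients; and the floor inequalities \eqref{eq:ineq0}--\eqref{eq:ineq2}, which let $\sum_i\lfloor\nu_p(((p-1)l_i)!)/(p-1)\rfloor$ be absorbed into a single floor. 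Feeding the inductive hypothesis for the $\beta_{l_i}$ through these, together with the extracted factor $p^{-1}$, yields $-\nu_p(\beta_l)\le l+\lfloor\nu_p(((p-1)l)!)/(p-1)\rfloor$; that the recursion keeps the $b_{p,m}$ $p$-adic rational is consistent with Proposition \ref{pro:jun}.

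For the equality statement I would track which of the inequalities above are tight. When $l=1$, or $l\ge2$ and $p\nmid(l+1)$, one shows the recursion for $\beta_l$ has a single dominant contribution --- traceable through the $p\mid(M+1)$ links back to $\beta_1$ --- that realizes the bound and is not cancelled by the remaining terms, so equality propagates. When $p\mid(l+1)$, that link instead introduces a coefficient $g_{(M+1)/p-1}$ whose valuation already exceeds the generic estimate by one unit (or the candidate leading terms cancel), so every contribution stays strictly below the bound and $-\nu_p(\beta_l)<l+\lfloor\nu_p(((p-1)l)!)/(p-1)\rfloor$; the divisibility and floor estimates of Lemma \ref{pro:prime} are again what quantify the gap.

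The main obstacle will be the bookkeeping in the inductive step: the recursion is a sum of products of several lower-index coefficients rather than a one-term recurrence, the branches $p\mid(M+1)$ and $p\nmid(M+1)$ behave differently, and the denominator $1/(p-1)$ in the statement only emerges once the binomial valuations, the factorial superadditivity $\nu_p(a!)+\nu_p(b!)\le\nu_p((a+b)!)$, and the floor inequalities \eqref{eq:ineq0}--\eqref{eq:ineq2} are arranged to cooperate in exactly the right direction, with no slack lost at the application of \eqref{eq:ineq1}. This is presumably the point where Yamashita's argument fails for composite $d$: for $d$ not prime the identity $\nu_q(\binom{d}{j})=1$ breaks for primes $q\mid d$, so the denominator accumulated at each step of the recursion is no longer a single power of $d$, and the clean bound of the statement does not survive.
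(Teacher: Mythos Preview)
Your route is genuinely different from the paper's. The paper does \emph{not} argue by induction on Jungreis's recursion. Instead it first proves a more general statement (Theorem~\ref{th:main}) for arbitrary $d$ via the Ewing--Schober closed-form formula (Corollary~\ref{th:comb}): for $d=p$ prime this reads
\[
b_{p,m}=-\frac{1}{m}\sum_{(j_1,\dots,j_n)\in\mathbb{J}}\ \prod_{k=1}^{n} C_{j_k}\!\left(\frac{m}{p^{\,n-k+1}}-\sum_{r<k}p^{\,k-r}j_r\right),
\]
where $\mathbb{J}$ is the set of nonnegative tuples with $\sum_k(p^{\,n-k+1}-1)j_k=m+1$. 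The paper estimates $\nu_p$ of each factor $C_{j_k}(\alpha)$ by clearing the denominator $p^{\,j_k(n-k+1)}$ and bounding the numerator via $\nu_p(\beta-\ell p^{\,n-k+1})\ge\min\{\nu_p(m),\,n-k+1\}$; combining over $k$ and over $\mathbb{J}$ with Legendre's formula yields $-\nu_p(b_{p,m})\le\nu_p(a!)+a$ with $a=(m+1)/(p-1)$. Equality is pinned down by the observation that the maximizing tuple is uniquely $(0,\dots,0,a)$, and that single term is visibly nonzero, so no cancellation can occur. Finally a short floor/Legendre computation converts $\nu_p(a!)+a$ into $\lfloor\nu_p((pm+p)!)/(p-1)\rfloor$. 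What this buys is that the ``bookkeeping'' you flag as the main obstacle simply does not arise: the combinatorial formula unfolds the recursion completely, the sum is finite, and the extremal term is explicit.

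Your inductive plan via the B\"ottcher functional equation is closer in spirit to Yamashita's original argument, and for the bare inequality it looks tractable with the ingredients you list. The fragile part of your sketch is the equality case: saying there is ``a single dominant contribution\dots not cancelled by the remaining terms'' is exactly where an inductive proof must do real work, since at each step the recursion produces many terms of comparable valuation and you must show their sum does not lose a power of~$p$. In the paper's approach this is a one-line observation (the maximizer over $\mathbb{J}$ is unique), whereas in yours you would need an explicit non-cancellation lemma threaded through the induction; as written, that step is asserted rather than argued. Your reduction of the target via $\nu_p((p(m+1))!)=(m+1)+\nu_p((m+1)!)$ and your translation of $p\nmid m$ to $p\nmid(l+1)$ are both correct.
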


Hence Observation \ref{ob:z} holds for general $m$.
In this paper, we give an estimate for the prime factor of the denominator of $b_{d,m}$.
Our estimate includes Observation \ref{ob:z} and is equal to Yamashita's estimate if $d$ is a prime number.
Actually the general statement of Observation \ref{ob:z} and Theorem \ref{thm:yame} are given as a corollary of our main results.
Furthermore we obtain another corollary for the growth of the denominator of $b_{d,m}$.

\section{Main Result}
First, we introduce Ewing and Schober's coefficients formula for general $d$.
The result for $d = 2$ is given in \cite{coef};
however, the argument can be generalized to $d \geq 2$ similarly.
For more details, see {\cite[Lemma 4.18]{yamashita}}.

\begin{lem}[{\cite[Theorem 1]{coef}}]\label{lem:int}
For $n \in \Z^+$, let $1 \leq m \leq d^{n+1}-3$ and $R>0$ be sufficiently large.
Then,
\begin{eqnarray} \label{eq:int}
b_{d,m}= - \frac{1}{2\pi m i } \int_{|z|=R} P_{d,z}^{\circ n}(z)^{m/d^n} \, \mathrm{d}z .
\end{eqnarray}
\end{lem}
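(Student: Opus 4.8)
The plan is to follow the derivation of Ewing and Schober for $d=2$ in \cite{coef}, changing $2$ to $d$ throughout and reading off the admissible range of $m$ from the Böttcher asymptotics. The starting point is the Böttcher coordinate $\varphi_c$ of $P_{d,c}$ at $\infty$, normalized by $\varphi_c(w)/w\to1$, which conjugates $P_{d,c}$ to $w\mapsto w^{d}$ near $\infty$, is given by the product
\[
\varphi_c(w)=w\prod_{j=1}^{\infty}\Bigl(1+\frac{c}{\bigl(P_{d,c}^{\circ(j-1)}(w)\bigr)^{d}}\Bigr)^{1/d^{j}}
\]
which converges for $|w|$ large, and satisfies $\Phi_d(c)=\varphi_c(c)$, hence $\Phi_d(c)^{d^{n}}=\varphi_c\bigl(P_{d,c}^{\circ n}(c)\bigr)$ for $n\in\Z^{+}$ and $|c|$ large. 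Putting $w=P_{d,c}^{\circ n}(c)$ in the product, so that $P_{d,c}^{\circ(j-1)}(w)=P_{d,c}^{\circ(n+j-1)}(c)$, and taking $(m/d^{n})$-th powers, I get, for $|c|>R$ with $R$ large,
\[
\Phi_d(c)^{m}=\bigl(P_{d,c}^{\circ n}(c)\bigr)^{m/d^{n}}g_{n}(c),\qquad g_{n}(c):=\prod_{j=1}^{\infty}\Bigl(1+\frac{c}{\bigl(P_{d,c}^{\circ(n+j-1)}(c)\bigr)^{d}}\Bigr)^{m/d^{n+j}},
\]
where $\Phi_d(c)^{m}$ is the branch asymptotic to $c^{m}$ at $\infty$ (single valued on $\CC\setminus\M_d$, since this domain is simply connected and $\Phi_d$ is zero-free there), $\bigl(P_{d,c}^{\circ n}(c)\bigr)^{m/d^{n}}$ is the branch asymptotic to $c^{m}$ on $\{|c|>R\}$, and $g_{n}$ is holomorphic with $g_{n}\to1$ at $\infty$. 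The branch $\bigl(P_{d,c}^{\circ n}(c)\bigr)^{m/d^{n}}$ is single valued on $\{|c|>R\}$ because the polynomial $c\mapsto P_{d,c}^{\circ n}(c)$ has degree $d^{n}$ with all of its zeros in $\M_d$, so it winds $d^{n}$ times around $0$ as $c$ traverses $|c|=R$; thus both $\Phi_d(c)^{m}$ and $\bigl(P_{d,c}^{\circ n}(c)\bigr)^{m/d^{n}}$ are holomorphic on $\{|c|>R\}$ with a pole of order $m$ at $\infty$.

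Next I would expand $g_{n}$ near $c=\infty$. From $P_{d,c}^{\circ k}(c)=c^{d^{k}}\bigl(1+O(c^{1-d})\bigr)$ and the fact that the leading correction of $\varphi_c(w)/w-1$ is $\tfrac{c}{d}w^{-d}$, whose coefficient is \emph{linear in $c$}, one finds $g_{n}(c)=1+\tfrac{m}{d^{n+1}}c^{\,1-d^{n+1}}+(\text{lower-order powers of }1/c)$, hence
\[
\Phi_d(c)^{m}-\bigl(P_{d,c}^{\circ n}(c)\bigr)^{m/d^{n}}=\bigl(P_{d,c}^{\circ n}(c)\bigr)^{m/d^{n}}\bigl(g_{n}(c)-1\bigr)=\tfrac{m}{d^{n+1}}c^{\,m+1-d^{n+1}}+(\text{lower order})
\]
as $c\to\infty$. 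Consequently the Laurent expansions at $\infty$ of these two holomorphic functions share the coefficient of $c^{\ell}$ for every $\ell\ge m+2-d^{n+1}$. The hypothesis $1\le m\le d^{n+1}-3$ is precisely the condition that puts $\ell=-1$ in this range, so the coefficient of $c^{-1}$ in $\bigl(P_{d,c}^{\circ n}(c)\bigr)^{m/d^{n}}$ equals the coefficient of $c^{-1}$ in $\Phi_d(c)^{m}$; by the residue theorem the former equals $\frac{1}{2\pi i}\int_{|z|=R}\bigl(P_{d,z}^{\circ n}(z)\bigr)^{m/d^{n}}\,\mathrm{d}z$ for all sufficiently large $R$.

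It remains to identify the coefficient of $c^{-1}$ in $\Phi_d(c)^{m}$ with $-m\,b_{d,m}$. Because $b_{d,m}$ is the coefficient of $\zeta^{-m}$ in the Laurent expansion of $\Psi_d$ on $\D^{*}$, Cauchy's formula gives, for any $\rho>1$ and any $m\ge1$,
\[
b_{d,m}=\frac{1}{2\pi i}\int_{|\zeta|=\rho}\Psi_d(\zeta)\,\zeta^{m-1}\,\mathrm{d}\zeta.
\]
Substituting $\zeta=\Phi_d(c)$, i.e.\ $c=\Psi_d(\zeta)$, $\mathrm{d}\zeta=\Phi_d'(c)\,\mathrm{d}c$, carries the circle $|\zeta|=\rho$ to a positively oriented Jordan curve $\Gamma\subset\CC\setminus\M_d$ that winds once around $\M_d$, and gives $b_{d,m}=\frac{1}{2\pi i\,m}\int_{\Gamma}c\,\mathrm{d}\bigl(\Phi_d(c)^{m}\bigr)$; integrating by parts (the boundary term vanishing since $\Gamma$ is closed and $c\,\Phi_d(c)^{m}$ single valued) gives $b_{d,m}=-\frac{1}{2\pi i\,m}\int_{\Gamma}\Phi_d(c)^{m}\,\mathrm{d}c$. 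Deforming $\Gamma$ to $|c|=R$ inside $\CC\setminus\M_d$ and applying the residue theorem yields $b_{d,m}=-\frac1m\bigl(\text{coefficient of }c^{-1}\text{ in }\Phi_d(c)^{m}\bigr)$. Together with the previous paragraph this gives
\[
b_{d,m}=-\frac{1}{2\pi m i}\int_{|z|=R}\bigl(P_{d,z}^{\circ n}(z)\bigr)^{m/d^{n}}\,\mathrm{d}z,
\]
which is \eqref{eq:int}.

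The contour manipulations, the substitution, and the residue computations are routine; the genuinely delicate point is the expansion of $g_{n}$ in the second paragraph. The key subtlety is that the coefficient of the leading correction of $\varphi_c(w)/w$ is itself linear in $c$, so that after the substitution $w=P_{d,c}^{\circ n}(c)$ the difference $\Phi_d(c)^{m}-\bigl(P_{d,c}^{\circ n}(c)\bigr)^{m/d^{n}}$ has leading exponent $m+1-d^{n+1}$ rather than $m-d^{n+1}$; this shift by one is exactly what produces the bound $m\le d^{n+1}-3$ (and shows it is sharp, since for $m=d^{n+1}-2$ the coefficient of $c^{-1}$ in the integrand would differ from $-m\,b_{d,m}$ by $\tfrac{m}{d^{n+1}}\neq0$). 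One also has to verify, as above, that $\bigl(P_{d,c}^{\circ n}(c)\bigr)^{m/d^{n}}$ is single valued on the relevant annulus, which is where the fact that the zeros of $c\mapsto P_{d,c}^{\circ n}(c)$ lie in $\M_d$ enters.
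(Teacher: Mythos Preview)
The paper does not give its own proof of this lemma; it simply cites Ewing--Schober \cite{coef} for $d=2$ and remarks that the argument generalizes to $d\ge2$ (referring to \cite[Lemma 4.18]{yamashita} for details). Your proposal carries out exactly that generalization---Böttcher coordinate, the product formula for $\varphi_c$, the asymptotic $g_n(c)=1+\tfrac{m}{d^{n+1}}c^{1-d^{n+1}}+\cdots$ yielding the range $m\le d^{n+1}-3$, and the change of variables/integration by parts identifying the residue with $-m\,b_{d,m}$---and is correct.
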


Using the recursion $P_{d,c}^{\circ n}(z)=(P_{d,c}^{\circ (n-1)} (z))^d+c$ for (\ref{eq:int}), we obtain the following formula for coefficients in the same way (see {\cite[Corollary 4.20]{yamashita}}).
Let $C_j\left(a\right)$ denote the general binomial coefficient, i.e., $$C_j\left(a\right)=\displaystyle\frac{a(a-1)(a-2) \cdots (a-j+1)}{j(j-1)(j-2) \cdots (1)}$$ for any $a \in \R$ and $j \in \N$ with $C_0\left(a\right)=1$.

\begin{cor}[{\cite[Corollary]{area}}]\label{th:comb}
Let $n \in \N$ and $1 \leq m \leq d^{n+1}-3$. Then,
\begin{eqnarray}\label{eq:comb}
\nonumber
b_{d,m}=& - \displaystyle\frac{1}{m} \sum  & C_{j_1} \left( \frac{m}{d^n} \right) C_{j_2} \left( \frac{m}{d^{n-1}}-dj_1 \right) C_{j_3} \left( \frac{m}{d^{n-2}}-d^2j_1-dj_2 \right) \\
\nonumber
&& \cdots C_{j_n} \left( \frac{m}{d}-d^{n-1}j_1-d^{n-2}j_2 -\dots -dj_{n-1} \right),
\end{eqnarray}
where the sum is over all non-negative indices $j_1, \dots, j_n$ such that $(d^n-1)j_1+(d^{n-1}-1)j_2+(d^{n-2}-1)j_3 + \cdots +(d-1)j_n=m+1$.
\end{cor}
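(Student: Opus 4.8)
The plan is to derive the displayed formula for $b_{d,m}$ from the integral representation in Lemma~\ref{lem:int} by iterating the recursion $P_{d,z}^{\circ n}(z)=(P_{d,z}^{\circ(n-1)}(z))^d+z$ and expanding each resulting power via the general binomial series. Starting from
$$b_{d,m}=-\frac{1}{2\pi m i}\int_{|z|=R}\bigl(P_{d,z}^{\circ n}(z)\bigr)^{m/d^n}\,\mathrm{d}z,$$
I would write $\bigl(P_{d,z}^{\circ n}(z)\bigr)^{m/d^n}=\bigl((P_{d,z}^{\circ(n-1)}(z))^d+z\bigr)^{m/d^n}=\bigl(P_{d,z}^{\circ(n-1)}(z)\bigr)^{m/d^{n-1}}\bigl(1+z\,(P_{d,z}^{\circ(n-1)}(z))^{-d}\bigr)^{m/d^n}$ and apply the binomial expansion $\sum_{j_1\ge 0}C_{j_1}(m/d^n)\,z^{j_1}\,(P_{d,z}^{\circ(n-1)}(z))^{-dj_1}$, which converges for $|z|=R$ large since $P_{d,z}^{\circ(n-1)}(z)\sim z^{d^{n-1}}$ there. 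Repeating this for $P_{d,z}^{\circ(n-1)}(z),\dots,P_{d,z}^{\circ 1}(z)=z^d+z$ peels off one binomial factor at each stage; after $n$ steps the integrand becomes a multiple series whose generic term is a product $C_{j_1}(\cdots)C_{j_2}(\cdots)\cdots C_{j_n}(\cdots)$ times an explicit power of $z$.

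The second step is bookkeeping of the exponent of $z$. Each substitution $P_{d,z}^{\circ k}(z)=(P_{d,z}^{\circ(k-1)}(z))^d+z$ contributes, after the binomial expansion, a factor $z^{j_{n-k+1}}$ and changes the exponent on the next iterate; tracking the iterated bottom power $z^d+z=z^d(1+z^{1-d})$ at the innermost stage shows that the total power of $z$ accumulated in a given term equals $m - \sum_{i=1}^n (d^{n-i+1}-1)\,j_i + (\text{the leading contribution})$. Collecting these, the residue at infinity picks out exactly the terms with total $z$-exponent equal to $-1$ in $\frac{1}{z}\,(\text{integrand})$, i.e. the coefficient of $z^{-1}$; this is where the constraint
$$(d^n-1)j_1+(d^{n-1}-1)j_2+\cdots+(d-1)j_n=m+1$$
emerges. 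I would also check that the arguments of the binomial coefficients shift correctly: after extracting $z^{j_1}(P_{d,z}^{\circ(n-1)}(z))^{-dj_1}$ the next iterate is raised to the power $m/d^{n-1}-dj_1$, then to $m/d^{n-2}-d^2j_1-dj_2$, and so on, matching the stated arguments $C_{j_k}\!\bigl(m/d^{n-k+1}-d^{n-k}j_1-\cdots-d j_{k-1}\bigr)$. Finally $-\frac{1}{2\pi m i}\oint z^{-1}\,\mathrm{d}z=-\frac{1}{m}$ produces the prefactor.

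The main obstacle I anticipate is making the interchange of the integral with the $n$-fold infinite summation rigorous, and simultaneously verifying that each binomial expansion $(1+w)^{a}=\sum_j C_j(a)w^j$ is applied only where $|w|<1$ on $|z|=R$. Since $P_{d,z}^{\circ k}(z)$ is a monic polynomial in $z$ of degree $d^k$ with $P_{d,z}^{\circ k}(z)\to\infty$ as $z\to\infty$, for $R$ sufficiently large one has $|z|\,|P_{d,z}^{\circ k}(z)|^{-d}<1$ uniformly on $|z|=R$, so each series converges absolutely and uniformly there; a dominated-convergence or uniform-convergence argument then justifies integrating term by term, and the finiteness of the index set for each fixed total degree keeps the final sum finite. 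The only remaining point is to confirm that no term with the prescribed linear constraint is inadvertently dropped because some $C_{j_k}(\cdot)$ vanishes — but that is automatic, since vanishing binomial coefficients simply contribute zero to the sum. Once convergence is in hand, the computation of the exponent of $z$ and extraction of the residue is routine algebra, and the bound $1\le m\le d^{n+1}-3$ inherited from Lemma~\ref{lem:int} guarantees the integral formula is valid in the first place.
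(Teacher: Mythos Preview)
Your proposal is correct and follows essentially the same approach as the paper: the paper does not give a detailed proof of this corollary but merely says that one obtains it from Lemma~\ref{lem:int} ``using the recursion $P_{d,c}^{\circ n}(z)=(P_{d,c}^{\circ (n-1)}(z))^d+c$'' and cites \cite{area,yamashita}. Your iterated factor-and-expand computation, the bookkeeping of the exponent $m-\sum_k(d^{n-k+1}-1)j_k$, and the residue condition $(d^n-1)j_1+\cdots+(d-1)j_n=m+1$ are exactly the details behind that one-line sketch, and your uniform-convergence justification on $|z|=R$ is the standard way to make the termwise integration rigorous.
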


Levin \cite{levin2}, Lau and Schleicher \cite{sym}, and Yamashita \cite{yamp} proved the following lemma independently.
Furthermore, using Corollary \ref{eq:comb}, Yamashita presented an alternative short proof of Lemma \ref{lem:yamp} in \cite{yamashita}.

\begin{lem}[{\cite[Theorem on p. 3515]{levin2} },{\cite[p. 47]{sym} },{\cite[Theorem 2]{yamp}}]\label{lem:yamp}
Let $d \geq 3$ and $m \in \Z^+$. If $ (d-1) \nmid (m+1)$, then $b_{d,m}=0$ holds.
\end{lem}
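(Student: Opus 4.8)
The plan is to extract, from the coefficient formula in Corollary \ref{th:comb}, a vanishing criterion governed purely by the congruence class of $m+1$ modulo $d-1$. The starting point is the observation that every summand in \eqref{eq:comb} is a product of general binomial coefficients $C_{j_k}(\,\cdot\,)$ whose arguments all have the shape $\frac{m}{d^{\,n-k+1}} - (\text{integer})$. The crucial arithmetic fact is that $C_j(a)$, as a polynomial in $a$ of degree $j$, can be written with the structure of $\frac{1}{j!}\,a(a-1)\cdots(a-j+1)$, so when $a = \frac{m}{d^{\ell}} - N$ with $N \in \Z$ we may clear the denominator $d^{\ell}$ and analyze the $d$-adic valuation of the resulting integer numerator.

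First I would set $e := m+1 \bmod (d-1)$ and assume $e \neq 0$, i.e. $(d-1) \nmid (m+1)$. I would then choose the index $n$ in Corollary \ref{th:comb} large enough that the hypothesis $1 \le m \le d^{n+1}-3$ holds (any sufficiently large $n$ works, and the left-hand side $b_{d,m}$ does not depend on $n$). The key step is to look at the constraint on the summation indices,
\[
(d^n-1)j_1 + (d^{n-1}-1)j_2 + \cdots + (d-1)j_n = m+1,
\]
and reduce it modulo $d-1$. Since $d^{\,s}-1 \equiv 0 \pmod{d-1}$ for every $s \ge 1$, the entire left-hand side is $\equiv 0 \pmod{d-1}$, whereas the right-hand side is $m+1 \equiv e \not\equiv 0 \pmod{d-1}$. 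Hence there are \emph{no} admissible tuples $(j_1,\dots,j_n)$ at all: the sum in \eqref{eq:comb} is empty, so $b_{d,m} = 0$. This is in fact the whole argument, and it is clean because the constraint set is literally empty rather than merely summing to zero.

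The one point that needs care — and the step I expect to be the main (minor) obstacle — is making sure the reduction modulo $d-1$ is applied to a representation of $b_{d,m}$ that is genuinely valid, i.e. confirming that Corollary \ref{th:comb} does apply for the chosen $n$ and that the empty-sum convention yields $0$ rather than something undefined. Concretely I would note that for $m \ge 1$ fixed we can take $n$ with $d^{n+1} \ge m+3$, which certainly exists, and then invoke \eqref{eq:comb} verbatim; since no tuple satisfies the divisibility-flavored linear equation, the coefficient is an empty sum and equals zero. I would also remark that this reproves, for $d \ge 3$, the statement that the nonzero coefficients $b_{d,m}$ occur only at indices with $m \equiv d-2 \pmod{d-1}$, which matches the hypothesis $(p-1)\mid(m+1)$ appearing in Theorem \ref{thm:yame} in the prime case. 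No deeper dynamical input is needed; everything reduces to the combinatorial identity already in hand.
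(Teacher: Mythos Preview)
Your argument is correct and is exactly the short proof the paper alludes to when it says that Yamashita used Corollary~\ref{th:comb} to reprove Lemma~\ref{lem:yamp}: reducing the index constraint $(d^n-1)j_1+\cdots+(d-1)j_n=m+1$ modulo $d-1$ shows the sum is empty whenever $(d-1)\nmid(m+1)$. Two small remarks: the opening paragraph about clearing $d^{\ell}$ from the binomial arguments and tracking $d$-adic valuations is unnecessary here (nothing in the vanishing argument uses it), and you should separately dispose of the case $m=0$, since Corollary~\ref{th:comb} is stated only for $m\ge 1$; for $d\ge 3$ one checks $b_{d,0}=0$ directly (e.g.\ from the normalization of $\Psi_d$ or from Lemma~\ref{lem:int} with $n=0$).
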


Our main result is given as below.

\begin{thm} \label{th:main}
Let $d \in \N \setminus \{1\}$, $m \in \Z^+$ with $(d-1) \mid (m+1)$ and set $a=(m+1)/(d-1)$.
Assume that $d$ is factorized as $d=p_1^{t_1} p_2^{t_2} \dots p_s^{t_s}$, where $s \in \N, t_1,t_2,\dots, t_s \in \N $ and $p_1, p_2, \dots, p_s$ are distinct prime numbers.
Then,
$$ -\nu_{p_i} (b_{d,m}) \leq \nu_{p_i} (a!) + t_i a$$
holds for any $p_i \in \{p_1, p_2, \dots, p_s\}$.
Furthermore, equality holds precisely when $m=d-2$ or $p_i \nmid m$.
\end{thm}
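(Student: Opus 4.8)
The plan is to estimate $-\nu_{p_i}(b_{d,m})$ directly from the combinatorial formula in Corollary \ref{th:comb}, using the $p$-adic valuation of general binomial coefficients $C_j(a)$. The key arithmetic input is a bound of the form $\nu_{p}\big(C_j(a)\big) \geq -\nu_p(j!)$, which one gets by writing $C_j(a) = a(a-1)\cdots(a-j+1)/j!$: when $a \in \Z$ the numerator is a genuine product of integers so $\nu_p$ of it is $\geq 0$, and more generally when $a$ is a $p$-adic integer (as will be the case for the arguments $m/d^k - (\text{integer})$ once we localize at a prime $p_i \mid d$, since then $m/d^k$ need not be a $p_i$-adic integer — here one must be more careful). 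First I would fix $i$, write $p = p_i$, $t = t_i$, and choose $n$ large enough that $1 \leq m \leq d^{n+1}-3$ and apply \eqref{eq:comb}. For each multi-index $(j_1,\dots,j_n)$ in the sum, the constraint $(d^n-1)j_1 + \cdots + (d-1)j_n = m+1$ together with $(d-1)\mid(d^k-1)$ forces $(d-1)\mid(m+1)$ consistently, and dividing through by $d-1$ gives a relation among the $j_k$ that controls $\sum j_k$ in terms of $a = (m+1)/(d-1)$.

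The heart of the estimate is to bound $\nu_p$ of a single summand
$$
C_{j_1}\!\left(\tfrac{m}{d^n}\right) C_{j_2}\!\left(\tfrac{m}{d^{n-1}}-dj_1\right)\cdots C_{j_n}\!\left(\tfrac{m}{d}-d^{n-1}j_1-\cdots-dj_{n-1}\right).
$$
I would argue that the $k$-th factor $C_{j_k}(m/d^{n-k+1} - (\text{integer divisible by } d))$ has $p$-adic valuation at least $\nu_p(j_k!) \cdot(-1) + \text{(something from the }d\text{-power in the denominator of the argument)}$. Concretely, $m/d^{n-k+1}$ contributes a denominator of $p$-valuation $t(n-k+1) - \nu_p(m)$ to each of the $j_k$ factors in the numerator of $C_{j_k}$, but the telescoping shifts $d^{k-1}j_1 + \cdots$ are $p$-integral, so the dominant negative contribution is $-\nu_p(j_k!) - j_k\big(t(n-k+1) - \nu_p(m)\big)$ or similar. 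Summing over $k$ and using $(d^{n-k+1}-1)j_k$ summing to $m+1$, the powers of $d$ telescope and one should land on $-\nu_p(b_{d,m}) \leq \nu_p(m) + \sum_k \nu_p(j_k!) + t\cdot(\text{weighted sum of }j_k) - \nu_p(m)$, where the leading $1/m$ in \eqref{eq:comb} cancels the $\nu_p(m)$ and the weighted sum collapses to $a$. Then I would invoke the subadditivity $\sum_k \nu_p(j_k!) \leq \nu_p\big((\sum_k j_k)!\big) \leq \nu_p(a!)$ via \eqref{eq:ineq3}, \eqref{eq:ineq1} and monotonicity, giving the claimed bound $-\nu_p(b_{d,m}) \leq \nu_p(a!) + ta$.

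For the equality characterization, the strategy is the standard one: identify the unique term in the sum achieving the minimal $p$-adic valuation and show the other terms have strictly larger valuation, so \eqref{eq:ineq6} gives equality rather than just $\geq$. I expect the extremal term to be $j_1 = a$, $j_2 = \cdots = j_n = 0$ (the ``first block absorbs everything''), for which $(d^n-1)a \cdot \frac{d-1}{d-1}$ should match $m+1$ exactly — one checks $(d^n-1)\cdot\frac{m+1}{d-1}$ is not literally $m+1$ unless $n=1$, so the correct extremal index is instead $j_n = a$, the rest zero, giving $(d-1)j_n = m+1$. For that term the binomial product reduces to $C_a(m/d)$, whose $p$-valuation is $\nu_p(m/d \cdot (m/d-1)\cdots) - \nu_p(a!)$, and the condition for this to meet the bound with equality (no extra cancellation in the numerator, i.e.\ all the shifted arguments being $p$-adic units at the relevant level) translates exactly into ``$m = d-2$ or $p \nmid m$.'' The main obstacle, and where I would spend the most care, is the bookkeeping of $p$-adic valuations of the general binomial coefficients $C_{j_k}(\cdot)$ when the argument is a non-$p$-integral rational: one needs a clean lemma, something like $\nu_p\big(C_j(x)\big) \geq -\nu_p(j!) + j\cdot\min(0,\nu_p(x))$-type bounds with equality conditions, proved via the product formula and \eqref{eq:ineq4}--\eqref{eq:ineq6}, and then applied uniformly across all $n$ factors while keeping the telescoping shifts from spoiling the count. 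Establishing that auxiliary lemma — and verifying the shifts $d^{k-1}j_1 + \cdots + dj_{k-1}$ really are harmless $p$-adic integers that don't create cancellation in the extremal term — is the technical crux.
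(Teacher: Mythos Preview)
Your proposal is correct and follows essentially the same route as the paper: apply Corollary~\ref{th:comb}, bound $-\nu_{p_i}$ of each factor $C_{j_k}(m/d^{n-k+1}-\text{integer})$ by $\nu_{p_i}(j_k!)+j_k\max\{t_i(n-k+1)-\nu_{p_i}(m),0\}$ (your lemma $\nu_p(C_j(x))\ge -\nu_p(j!)+j\min(0,\nu_p(x))$ is exactly this, stated abstractly rather than by the paper's explicit clearing of denominators), sum using $\sum\nu_{p_i}(j_k!)\le\nu_{p_i}(a!)$ and $\sum j_k(n-k+1)\le a$, and pin down equality via the unique extremal index $(0,\dots,0,a)$. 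The one place where your sketch is loose is the ``$+\nu_p(m)-\nu_p(m)$ cancels'' step in the $p_i\mid m$ case: the paper does not get a clean cancellation there but instead splits off the $\max\{\cdot,0\}$ and bounds $\sum_{k\le n'} j_k(t_i(n-k+1)-\nu_{p_i}(m))\le t_ia-\nu_{p_i}(m)$ separately, with equality forcing $a=1$ (i.e.\ $m=d-2$); once you do the bookkeeping carefully you will land on the same case split.
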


\begin{remark}
We disregard the case $(d-1) \nmid (m+1)$ because $-\nu_{p_i} (b_{d,m})=-\infty$ holds for any $p_i$ by Lemma \ref{lem:yamp}.
There are some zero-coefficients which are not included in Lemma \ref{lem:yamp} (see \cite{yamp}).
\end{remark}

\begin{proof}
Let $m \in \Z^+$ and assume that $d$ is factoraized as $d=p_1^{t_1} p_2^{t_2} \dots p_s^{t_s}$ where $s \in \N, t_1, t_2, \dots, t_s \in \N $ and $p_1, p_2, \dots ,p_s$ are distinct prime numbers.
From direct calculation, $b_{2,0}=-1/2$ (see \cite{jungreis,laurent}). 
Furthermore $b_{d,0}=0$ for $d \geq 3$ by Lemma \ref{lem:yamp}.
Hence, the statement holds for $m=0$.

For fixed $p_i \in \{ p_1, p_2, \dots, p_s \}$, we take $n \in \N$ such that $m \leq d^{n+1} -3$ and $\nu_{p_i}(m) < nt_i$.
We define the set $\mathbb{J}$ as:
\begin{eqnarray}
\nonumber
\mathbb{J} &=& \bigl\{ ( j_1,j_2, \dots, j_n ) \in (\Z^{+})^n :\\
\nonumber
&& (d^n - 1)j_1 + (d^{n-1} - 1)j_2 + \dots + (d-1)j_n =m+1 \bigl\}.
\end{eqnarray}
Let $k \in \{1,2, \dots, n\}$, $(j_1,j_2,\cdots,j_n) \in \mathbb{J} $ and set 
$$\alpha = m/d^{n-k+1} - d^{k-1}j_1- d^{k-2}j_2 - \cdots - d j_{k-1}$$
and
$$\beta = d^{n-k+1} \alpha = m - d^{n}j_1- d^{n-1}j_2 - \cdots - d^{n-k+2} j_{k-1}.$$
Then we have
\begin{eqnarray}
\nonumber
C_{j_k}( \alpha ) &=& \displaystyle \frac{ \alpha ( \alpha -1)( \alpha -2) \cdots ( \alpha - (j_k-1))}{j_k !}\\
&=& \frac{ \beta ( \beta - d^{n-k+1})( \beta - 2 d^{n-k+1} ) \cdots ( \beta - (j_k-1) d^{n-k+1} )}{ d^{j_k(n-k+1)}  j_k!}. \label{eq:cj}
\end{eqnarray}
The denominator of the fractional expression (\ref{eq:cj}) satisfies the following equality:
\begin{eqnarray}
\nu_{p_i} \left( d^{j_k(n-k+1)}  j_k! \right) &\overset{(\ref{eq:ineq4})}{=}& \nu_{p_i} (j_k !) +j_k t_i (n-k+1). \label{eq:add1}
\end{eqnarray}
On the other hand we have
\begin{eqnarray}
\nonumber
&& \nu_{p_i} \left( \beta - (l-1) d^{n-k+1} \right) \\
\nonumber
&=& \nu_{p_i} \left( m - d^{n}j_1- d^{n-1}j_2 - \cdots - d^{n-k+2} j_{k-1} - (l-1) d^{n-k+1} \right)\\
\nonumber
&=& \nu_{p_i} \left( m - d^{n-k+1} \left( d^{k-1}j_1- d^{k-2}j_2 - \cdots - d j_{k-1} - (l-1) \right) \right)\\
\nonumber
&\overset{(\ref{eq:ineq6})}{\geq}& \min\{ \nu_{p_i} (m), t_i (n-k+1) \}
\end{eqnarray}
for all $l \in \{ 1, 2, \cdots, j_k \}$.
Hence the numerator of the fractional expression (\ref{eq:cj}) satisfies the following inequality:
\begin{eqnarray}
\nonumber
&& \nu_{p_i} \left( \beta ( \beta - d^{n-k+1})( \beta - 2 d^{n-k+1} ) \cdots ( \beta - (j_k-1) d^{n-k+1} ) \right) \\
&\overset{(\ref{eq:ineq4})}{\geq}& j_k \min\{ \nu_{p_i} (m), t_i (n-k+1) \}.\label{eq:add2}
\end{eqnarray}
Combining (\ref{eq:add1}) and (\ref{eq:add2}), we obtain
\begin{eqnarray}
\nonumber
-\nu_{p_i}( C_{j_k}( \alpha ) ) &\overset{(\ref{eq:ineq5})}{\leq}& \nu_{p_i} (j_k !) + j_k t_i (n-k+1) - j_k \min\{ \nu_{p_i} (m), t_i (n-k+1) \} \\
&=& \nu_{p_i} (j_k !) + j_k \max \{ t_i (n-k+1)- \nu_{p_i}(m),0  \}. \label{eq:cja}
\end{eqnarray}

By Corollary \ref{th:comb} and (\ref{eq:cja}), 
\begin{eqnarray}
\nonumber
-\nu_{p_i}( b_{d,m} )  &\leq& \nu_{p_i} (m) + \max_{( j_1,j_2, \dots, j_n) \in \mathbb{J}} \left  \{ \displaystyle \sum_{k=1}^n  \nu_{p_i} (j_k !) \right \}\\
\nonumber
&&+ \max_{( j_1,j_2, \dots, j_n) \in \mathbb{J}} \left \{ \displaystyle \sum_{k=1}^n   j_k \max \left  \{ t_i (n-k+1)- \nu_{p_i}(m),0  \right \} \right \}.
\end{eqnarray}

Due to 
\begin{eqnarray}
\label{eq:jcon} (d^n - 1)j_1 + (d^{n-1} - 1)j_2 + \dots + (d-1)j_n =m+1
\end{eqnarray}
for all $(j_1,  j_2, \dots, j_n) \in \mathbb{J}$ and (\ref{eq:ineq1}), we have
\begin{eqnarray}
\nonumber
\displaystyle \sum_{k=1}^n  \nu_{p_i} (j_k !) &\overset{(\ref{eq:ineq3})}{=}& \displaystyle \sum_{k=1}^{n} \sum_{l=1}^{\infty} \left \lfloor \frac{j_k }{ p_i^l } \right \rfloor\\
\nonumber
&\overset{ (\ref{eq:ineq1})}{\leq}& \sum_{l=1}^{\infty} \left \lfloor \frac{ j_1+ \dots + j_n }{ p_i^l} \right \rfloor\\
\nonumber
&\overset{(\ref{eq:jcon})}{\leq}& \sum_{l=1}^{\infty} \left \lfloor \frac{m+1} {(d-1) p_i^l} \right \rfloor\\
\nonumber
&\overset{(\ref{eq:ineq3})}{=}& \nu_{p_i} (a!)
\end{eqnarray}
for all $1 \leq k \leq n.$
Equality holds if $j_1=j_2= \dots = j_{n-1}=0, j_n = (m+1)/(d-1)=a$.
Conversely if equality holds, then $j_1=j_2= \dots = j_{n-1}=0, j_n = a$,
because $\displaystyle(j_1,j_2,\dots,j_n) \in \mathbb{J}$ satisfies $$(d^{n-1}+d^{n-2}+ \dots +1) j_1+ \dots +(d+1) j_{n-1} + j_n = \frac{m+1} {d-1}$$
and hence $$\max_{\{j_1,\dots,j_n) \in \mathbb{J}} \{ j_1+ \dots +j_n \} = \frac{m+1}{d-1}$$
is valid precisely when  $j_1=j_2= \dots = j_{n-1}=0, j_n = a$.

We consider the case of $p_i \nmid m$.
Since $\nu_{p_i}(m)=0$ for $p_i \nmid m$, we have
\begin{eqnarray}
\nonumber
&& \displaystyle \sum_{k=1}^n   j_k \max \left \{ t_i (n-k+1)- \nu_{p_i}(m),0  \right \}  \\
\nonumber
&=& \displaystyle \sum_{k=1}^n j_k t_i (n-k+1) \\
\nonumber
& \leq & \displaystyle \sum_{k=1}^{n} j_k t_i \frac{d^{n-k+1}-1}{d-1}  \\
\nonumber
&\overset{(\ref{eq:jcon})}{=}&t_i a
\label{eq:in1}
\end{eqnarray}
for all $( j_1,j_2, \dots, j_n) \in \mathbb{J}$.
Further, equality holds if and only if $j_1=j_2= \dots = j_{n-1}=0, j_n = a$.
We note that if $j_1=j_2= \dots = j_{n-1}=0, j_n = a$, then $$C_{0}(m/d^n)C_{0}(m/d^{n-1}) \cdots C_{a}(m/d)=C_{a}(m/d) \not= 0.$$
Considering the elements of the sum in the formula in Corollary \ref{th:comb},  we have
$b_{d,m} \not= 0$ and $ -\nu_{p_i} (b_{d,m}) = \nu_{p_i} (a!) + t_i a$.

We consider the other case $p_i \mid m$.
Since $\nu_{p_i}(m) < n t_i$, there exists $1 \leq n' \leq n$ and $( j_1,j_2, \dots, j_n) \in \mathbb{J}$ such that
\begin{eqnarray}
\nonumber
&& \max_{( j_1,j_2, \dots, j_n) \in \mathbb{J}} \left \{ \displaystyle \sum_{k=1}^n   j_k \max \left \{ t_i (n-k+1)- \nu_{p_i}(m),0  \right \} \right \}\\
\nonumber
&=& \displaystyle \sum_{k=1}^{n'}   j_k (t_i (n-k+1)- \nu_{p_i}(m))\\
\nonumber
&=& \displaystyle \sum_{k=1}^{n'}   j_k t_i (n-k+1) - \sum_{k=1}^{n'} j_k \nu_{p_i}(m).
\end{eqnarray}
On the one hand,
\begin{eqnarray}
\nonumber
\displaystyle \sum_{k=1}^{n'}   j_k t_i (n-k+1) &\leq& \displaystyle \sum_{k=1}^{n}   j_k t_i (n-k+1) \\
\nonumber
&\leq& \displaystyle \max_{( j_1,j_2, \dots, j_n) \in \mathbb{J}} \left \{ \sum_{k=1}^{n} j_k t_i \frac{d^{n-k+1}-1}{d-1} \right \}\\
\nonumber
&=& t_i a.
\end{eqnarray}
Thus, equality holds if and only if $j_1=j_2= \dots = j_{n-1}=0, j_n =a$.
On the other hand,
$$\sum_{k=1}^{n'} j_k \nu_{p_i}(m) \geq \nu_{p_i}(m).$$
The necessary and sufficient condition for equality is $j_1+j_2+ \dots + j_n =1$.
Hence, 
\begin{eqnarray}
\nonumber
\displaystyle \sum_{k=1}^{n'}   j_k t_i (n-k+1) - \sum_{k=1}^{n'} j_k \nu_{p_i}(m) \leq t_i a - \nu_{p_i} (m).
\end{eqnarray}
Equality holds if and only if $j_1=j_2= \dots = j_{n-1}=0, j_n =1$, which means that $m=d-2$.
Therefore, if $p_i  \mid m$,
$$ -\nu_{p_i} (b_{d,m}) \leq  \nu_{p_i} (m)  + \nu_{p_i} (a!) + t_i a - \nu_{p_i} (m) = \nu_{p_i} (a!) + t_i a,$$
and equality holds if and only if $m=d-2$.
\end{proof}

Considering Theorem \ref{th:main}, we can verify Observation \ref{ob:z} for the denominator of nonzero-coefficients and  Theorem \ref{thm:yame} by the same calculation of Yamashita \cite{yamashita}.

\begin{proof}[Proof of Observation \ref{ob:z} and Theorem \ref{thm:yame}]
Let $p$ be a prime number.
By Lemma \ref{lem:yamp} and that $b_{2,0}=-1/2$, we may assume $(p-1) \mid (m+1)$.
We obtain $$-\nu_{p}(b_{p,m}) \leq \frac{m+1}{p-1}+\nu_{p}\left(\frac{m+1}{p-1}!\right)$$ from Theorem \ref{th:main}.
By the direct calculation which is the same as Yamashita's calculation in \cite{yamashita},
\begin{eqnarray}
\nonumber
&& \frac{m+1}{p-1}+\nu_{p}\left(\frac{m+1}{p-1}!\right)\\
\nonumber
&\overset{ (\ref{eq:ineq3})}{=}& \frac{m+1}{p-1} + \displaystyle \sum_{l=1}^{\infty} \left \lfloor \frac{m+1}{(p-1)p^l} \right \rfloor\\
\nonumber
&\overset{ (\ref{eq:ineq2})}{=}& \frac{m+1}{p-1} + \displaystyle \left \lfloor \frac{1}{p-1} \sum_{l=1}^{\infty} \left \lfloor \frac{m+1}{p^l} \right \rfloor \right \rfloor\\
\nonumber
&\overset{ (\ref{eq:ineq0})}{=}&\displaystyle \left \lfloor \frac{1}{p-1} \sum_{l=0}^{\infty} \left \lfloor \frac{m+1}{p^l} \right \rfloor \right \rfloor\\
\nonumber
&\overset{ (\ref{eq:ineq3})}{=}& \displaystyle \left \lfloor \frac{\nu_{p} ((pm+p)!) }{p-1} \right \rfloor.
\end{eqnarray}
\end{proof}
Furthermore, we obtain the following estimate for the growth of the denominator of $b_{d,m}$.
For any real number $x$, we set the ceiling function $\left \lceil x \right \rceil := \min\{ m \in \Z : m \geq x  \}.$

\begin{cor}
Let $m \in \Z^+$, $d \in \N \setminus \{1\}$, $(d-1) \mid (m+1)$ and $a=(m+1)/(d-1)$.
Assume that $d$ is factorized as $p_1^{t_1} p_2^{t_2} \dots p_s^{t_s}$, where $s \in \N, t_1,t_2,\dots ,t_s \in \N $ and $p_1, p_2, \dots, p_s$ are distinct prime numbers.
Then,
$$b_{d,m} d^{x(m)} \in \Z,$$
where
$$x(m):= \displaystyle \max_{ 1 \leq i \leq s }  \left \lceil \frac{ \nu_{p_i} (a!) + t_i a }{t_i} \right \rceil .$$
\end{cor}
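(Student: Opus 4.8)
The plan is to show $b_{d,m}\,d^{x(m)} \in \Z$ by verifying that its $p$-adic valuation is nonnegative for every prime $p$, using Theorem \ref{th:main} to handle the primes dividing $d$ and Proposition \ref{pro:jun} to dispose of the rest. First I would recall that a rational number lies in $\Z$ exactly when all of its $p$-adic valuations are nonnegative, and that by Proposition \ref{pro:jun} the coefficient $b_{d,m}$ is $d$-adic rational, so its denominator divides a power of $d$ and therefore involves only the primes $p_1, \dots, p_s$. Hence for any prime $q \notin \{p_1, \dots, p_s\}$ one has $\nu_q(b_{d,m}) \ge 0$ and, since $\nu_q(d) = 0$, also $\nu_q(b_{d,m}\,d^{x(m)}) \ge 0$; the vanishing case $b_{d,m} = 0$ is trivial because $\nu_q(0) = +\infty$. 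This reduces the problem to checking $\nu_{p_i}(b_{d,m}\,d^{x(m)}) \ge 0$ for each $i \in \{1,\dots,s\}$.

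Next, for a fixed $i$ I would compute $\nu_{p_i}(b_{d,m}\,d^{x(m)}) = \nu_{p_i}(b_{d,m}) + t_i\,x(m)$ via \eqref{eq:ineq4} together with $\nu_{p_i}(d) = t_i$, and then invoke Theorem \ref{th:main}, which gives $\nu_{p_i}(b_{d,m}) \ge -(\nu_{p_i}(a!) + t_i a)$. Combining these,
$$\nu_{p_i}(b_{d,m}\,d^{x(m)}) \ge t_i\,x(m) - \nu_{p_i}(a!) - t_i a,$$
so it suffices to know $t_i\,x(m) \ge \nu_{p_i}(a!) + t_i a$. By the definition of $x(m)$ and of the ceiling function, $x(m) \ge \lceil (\nu_{p_i}(a!) + t_i a)/t_i \rceil \ge (\nu_{p_i}(a!) + t_i a)/t_i$, and multiplying through by $t_i \ge 1$ yields the desired inequality.

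I do not expect any real obstacle: the corollary is a direct bookkeeping consequence of Theorem \ref{th:main}, the $d$-adic rationality of $b_{d,m}$, and the valuation criterion for integrality. The only points requiring a little attention are (a) separating off the zero-coefficients, or equivalently keeping the convention $\nu_p(0) = +\infty$ in force throughout, and (b) using $t_i \ge 1$ when clearing $t_i$ from the ceiling estimate. It is worth a sanity check that the edge case $m = 0$ --- which forces $d = 2$, $a = 1$, $x(0) = 1$ and $b_{2,0} = -1/2$, so that $b_{2,0}\,2^{x(0)} = -1 \in \Z$ --- is consistent with the formula, although it is already subsumed by the general argument.
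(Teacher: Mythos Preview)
Your argument is correct. The paper states this corollary without proof, treating it as an immediate consequence of Theorem~\ref{th:main} and Proposition~\ref{pro:jun}; your proposal supplies exactly the routine $p$-adic valuation bookkeeping one would expect, and there is nothing to add.
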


\section*{Acknowledgement}
The author would like to thank Professor Yohei Komori and Osamu Yamashita for their valuable advice and encouragement.
He further expresses his sincere gratitude to Professor Toshiyuki Sugawa for his helpful suggestions and constant encouragement.
Last but not least, the author is grateful to the referee for his/her careful reading of this manuscript and for his/her helpful comments.
\bibliographystyle{amsplain}

\end{document}